\newtheorem{theorem}{Theorem}
\newtheorem{assumption}{Assumption}
\newtheorem{corollary}[theorem]{Corollary}
\newtheorem{definition}{Definition}
\newtheorem{lemma}{Lemma}[section]
\newtheorem{proposition}{Proposition}[section]
\newtheorem{remark}{Remark}[section]
\newtheorem{example}{Example}[section]
\def\begeqn{\begin{equation}}
\def\endeqn{\end{equation}}
\def\begth{\begin{theorem}}
\def\endth{\end{theorem}}
\def\begprop{\begin{proposition}}
\def\endprop{\end{proposition}}
\def\begcor{\begin{corollary}}
\def\endcor{\end{corollary}}
\def\begdef{\begin{definition}}
\def\enddef{\end{definition}}
\def\beglemm{\begin{lemma}}
\def\endlemm{\end{lemma}}
\def\begexm{\begin{example}}
\def\endexm{\end{example}}
\def\begrem{\begin{remark}}
\def\endrem{\end{remark}}
\def\begassum{\begin{assumption}}
\def\endassum{\end{assumption}}
\begin{document}

\title{Faster Convergence of a Randomized Coordinate Descent Method for Linearly Constrained Optimization Problems}

\author{Qin Fang$^\dag$, Min Xu$^\ddag$ and Yiming Ying$^\S$\\
\\ \\
$^\dag$Information and Engineering College\\ Dalian University, Dalian 116622, China\\
$^\ddag$ School of Mathematical Sciences \\
Dalian University of Technology, Dalian 116024, China\\
$^\S$Department of Mathematics and Statistics\\
State University of New York, Albany, NY 12222, USA}

\date{}

\maketitle

\begin{abstract}
The problem of minimizing a separable convex function under linearly coupled constraints arises from various application domains such as economic systems, distributed control, and network flow. The main challenge for solving this problem is that the size of data is very large, which makes usual gradient-based methods infeasible. Recently, Necoara, Nesterov, and Glineur \cite{NecoaraNG15} proposed an efficient randomized coordinate descent method  to solve this type of optimization problems and presented an appealing convergence analysis. In this paper, we develop new techniques to analyze the convergence of such algorithms, which are able to greatly improve the results presented in \cite{NecoaraNG15}. This refined result is achieved by extending Nesterov's second technique \cite{Nesterov12} to the general  optimization problems with linearly coupled constraints.   A novel technique in our analysis is to establish the basis vectors for the subspace of the linearly constraints. 
\end{abstract}

\parindent=0cm

\section{Introduction}

Randomized block-coordinate descent (RBCD) method randomly chose one block of coordinates for updating according to a prescribed probability distribution at each iteration.  It has been proven to be very efficient to solve large-scale optimization problems \cite{LuX15,Nesterov12,RichtarikT14,Shalev-ShwartzT11,Shalev-ShwartzZ16}. In particular, Nesterov \cite{Nesterov12} studied RBCD methods for solving smooth convex problems and derived non-asymptotic convergence rates  in expectation without strong convexity or uniqueness assumptions. Specifically, according to the discussion in \cite{LuX15} there are roughly two techniques developed in this seminal paper for analyzing the convergence of the RBCD methods. The first technique is developed for the case where the blocks are randomly drawn from certain non-uniform  distribution. The second technique can yield better convergence rates which, however, only work for the uniform distribution. 

 Richt\'{a}rik and Tak\'{a}\v{c} \cite{RichtarikT14} established a high-probability type of iteration complexity of RBCD methods for composite convex problems by adopting Nesterov's first technique mentioned above.      
Lu and Xiao \cite{LuX15} obtained sharper expected-value type
of convergence rates of RBCD methods for composite convex problems  than those in  \cite{RichtarikT14}.  It can be regarded as an   extension and refinement of  Nesterov's second technique in \cite{Nesterov12} to composite optimization problems.  Parallel implementations of block-coordinate descent methods are studied in \cite{Necoara2013,Richtarik2016}.  In addition,  randomized block-coordinate descent methods under less conservative conditions have been also proposed in \cite{Necoara2016}.

In this paper, we are concerned with the following problem of minimizing a separable convex function subject to linearly coupled constraints:
\begin{equation}\label{eq:problem}
\begin{split}
&\min_{x_1,\ldots,x_N\in\mathbb{R}^{n}}\Big\{f_1(x_1)+\cdots+f_N(x_N)\Big\}\\
&~~~~~\text{s.t.:}~~~~~~x_1+\cdots+x_N=0.
\end{split}
\end{equation}
This simple model can be considered as the optimal resource allocation problem over a network. More precisely, one can interpret it as $N$ agents exchanging $n$ goods to minimize the total cost, where the constraints $\sum_{i=1}^Nx_i = 0$ are the market clearing or equilibrium requirements. Problem (\ref{eq:problem}) has many real applications in economic systems \cite{XiaoB06}, distributed computer systems \cite{KuroseS89}, distributed control \cite{NecoaraND11}, and network flow \cite{Bertsekas99}. In these applications, the size of data involved in formulation \eqref{eq:problem}  (i.e., $N$) is so large that usual methods based on full gradient computations are prohibitive.

Most of the aforementioned studies on RBCD methods  are concerned with optimization problems with decoupled constraints that can be characterized as a Cartesian product of certain convex sets. Hence, they can not be directly applied to the optimization problem \eqref{eq:problem} with coupled constraints, despite its extreme importance in practice.  Recently, Necoara, Nesterov, and Glineur \cite{NecoaraNG15} proposed an efficient randomized coordinate descent method for linearly constrained optimization over networks  and presented a convergence analysis in expectation.  The authors proposed the randomized $\tau$-block ($\tau\ge 2$) coordinate descent method for solving problem \eqref{eq:problem}, in which  $\tau$ blocks are randomly chosen for updating at each iteration.  The algorithm in \cite{NecoaraNG15} has been extended recently to composite problems \cite{Necoara2014} and  more general optimization models \cite{Necoara2013b}.   It can be regarded as an extension of Nesterov's first technique for smooth  optimization problems with decoupled constraints to the case of  linearly coupled constraints.  

As mentioned above,   Nesterov's second technique  can yield better rates as shown in \cite{LuX15,Nesterov12} for optimization problems with {\em  decoupled constraints}.  Moreover, it has been demonstrated so far that it works only for the RBCD methods  where the random blocks are  drawn from the {\em uniform} distribution. A natural question is that whether Nesterov's second technique works for smooth optimization problem \eqref{eq:problem} with linearly coupled constraints and    
certain non-uniform distributions. 

In this paper, we successfully give an affirmative answer to  the above question and establish significantly sharper convergence rates in expectation than that presented in \cite{NecoaraNG15}.    As a result, better iteration complexities in probability are also derived. The spirit of our proofs is similar to  those in various papers \cite{Nesterov12,NecoaraNG15} and particularly \cite{LuX15}. 
The main novelty in our analysis is to establish the basis vectors for the subspace of the linearly coupled  constraints in problem \eqref{eq:problem}.  These new technical lemmas are stated as Lemmas \ref{lem:imp1} and \ref{lem:imp2} in the subsequent sections. These are the key ingredients to obtain tighter results.

This paper is organized as follows. In Section 2 we review  the randomized $2$-block coordinate descent method for problem \eqref{eq:problem} proposed in \cite{NecoaraNG15} and present our main results. In Section 3 we first present some technical lemmas, and then provide the proofs of the main results. Finally, some concluding remarks are given in Section 4.

\section{Main results}\label{sec:main}
We begin with the introduction of randomized $\tau$-block coordinate descent method for problem \eqref{eq:problem} proposed in \cite{NecoaraNG15}. For simplicity, we only focus on the case of $\tau=2$.   To this end, let us introduce some notations.

Let $\mathbb{R}^{nN}$ denote the whole coordinate space, and $I_{nN}=[U_1, \ldots, U_N]$ a decomposition of the identity matrix $I_{nN}$ into $N$ submatrices $U_i\in \mathbb{R}^{nN\times n}$. Then, one can write any vector $x\in\mathbb{R}^{nN}$ in terms of its blocks $x_i\in \mathbb{R}^{n}$ as $x=\sum_{i=1}^NU_ix_i$. Meanwhile, each block $x_i\in \mathbb{R}^{n}$ of $x\in \mathbb{R}^{nN}$ can be represented uniquely as $x_i=U_i^Tx$. Denote by $\langle\cdot,\cdot\rangle$ and $\Vert\cdot\Vert$ the standard Euclidean inner product and norm on the subspace $\mathbb{R}^{n}$, respectively.

Throughout this paper, we assume the following basic assumption.
\begin{assumption}\label{assum:basic}
Each function $f_i$, $i=1,\dots,N$, is convex and has Lipschitz continuous gradient with Lipschitz constants $L_i>0$, i.e.,
\begin{equation}\label{eq:blocklip}
\Vert\nabla f_i(x_i+d_i)-\nabla f_i(x_i)\Vert\leq L_i \Vert d_i\Vert,~~~\forall x_i,d_i\in \mathbb{R}^{n}.
\end{equation}
\end{assumption}

Let $E$ denote the set of all pairs of indices $(i,j)$ with $1\leq i<j\leq N$. By using Lipschitz constants $L_i$, define a set of positive numbers $p_{ij}$ by
\begin{equation}\label{eq:probability}
p_{ij}:=\frac{\frac{1}{L_i}+\frac{1}{L_j}}{(N-1)\sum_{t=1}^N\frac{1}{L_t}},~~~(i,j)\in E.
\end{equation}
It is easy to check that $\sum_{(i,j)\in E}p_{ij}=1$, which means that $\{p_{ij}\}_{(i,j)\in E}$ can be taken as a probability vector over $E$. The above specific choice of $p_{ij}$ has been suggested in \cite{NecoaraNG15}.

Let $S=\left\{x\in\mathbb{R}^{nN}:\sum_{i=1}^{N}x_i=0\right\}$. For the current iterate $x\in S$ and a pair of indexes $(i,j)\in E$, to maintain the feasibility of the next iterate $x^{+}$, we define it in the following way:
$$x^{+}:=x+U_id_i(x)+U_jd_j(x),$$
where
\begin{eqnarray}
\left[d_i^T(x),d_j^T(x)\right]^T&=&\arg\min_{d_i+d_j=0}\left\{\langle\nabla f_i(x_i),d_i\rangle+\langle\nabla f_j(x_j),d_j\rangle+\frac{L_i}{2}\Vert d_i\Vert^2+\frac{L_j}{2}\Vert d_j\Vert^2\right\}\nonumber\\
&=&-\frac{1}{L_i+L_j}\left[(\nabla f_i(x_i)-\nabla f_j(x_j))^T,(-\nabla f_i(x_i)+\nabla f_j(x_j))^T\right]^T.\label{eq:optdir}
\end{eqnarray}
Let $x^0\in S$ be the initial point. Then the randomized 2-block coordinate descent method  proposed in \cite{NecoaraNG15}  can be described as follows:
\begin{algorithm}[H]
\caption{Randomized 2-Block Coordinate Descent Method}
\label{alg:R2BCDM}
\begin{algorithmic}
\STATE{Repeat for $k=0,1,2,\ldots$}
\STATE{1. Choose a pair of indices $(i_k,j_k)$ from $E$ randomly with the probability $p_{i_k,j_k}$.}
\STATE{2. Update $x^{k+1}=x^k+U_{i_k}d_{i_k}(x^k)+U_{j_k}d_{j_k}(x^k)$.}
\end{algorithmic}
\end{algorithm}

For $x\in \mathbb{R}^{nN}$, define $f(x):=f_1(x_1)+\cdots+f_N(x_N)$.
Let $X^{\ast}$ denote the set of optimal solutions to problem (\ref{eq:problem}), and $f^{\ast}$ the corresponding optimal value.
Equip the space $\mathbb{R}^{nN}$ with the norm
$\Vert x\Vert_L=\left(\sum_{i=1}^NL_i\Vert x_i\Vert^2\right)^{\frac{1}{2}}$, and
define $\tilde{R}(x^0):=\min_{x^\ast\in X^\ast}\left\{\Vert x^0-x^\ast\Vert_L\right\}$.
  From (\ref{eq:blocklip}), we know that
\begin{equation}\label{eq:bloquaapp}
f_i(x_i+d_i)\leq f_i(x_i)+\langle\nabla f_i(x_i),d_i\rangle+\frac{L_i}{2}\Vert d_i\Vert^2,~~~\forall x_i,d_i\in \mathbb{R}^{n}.
\end{equation}
Summing the inequalities (\ref{eq:bloquaapp}) for $i=1,\ldots,N$, we get  
\begin{equation}\label{eq:gloquaapp}
f(x+d)\leq f(x)+\langle\nabla f(x),d\rangle+\frac{1}{2}\Vert d\Vert_L^2,~~~\forall x,d\in \mathbb{R}^{nN}.
\end{equation}
Let $\mu_f$ denote the convexity parameter of $f$ with respect to the norm $\Vert \cdot\Vert_L$, i.e., the greatest $\mu\geq 0$ such that
\begin{equation}\label{eq:y1}
f(y)\geq f(x)+\langle\nabla f(x),y-x\rangle+\frac{\mu}{2}\Vert y-x\Vert_L^2,~~~\forall x,y\in \mathbb{R}^{nN}.
\end{equation}

Let $\eta_k$ denote the random variable
$\left\{(i_0,j_0),(i_1,j_1),\ldots,(i_{k-1},j_{k-1})\right\}$.

With these notations, we now state one of our main results.

\begin{theorem}\label{thm:coninexp}
Let $\{x^k\}$ be the sequence generated by Algorithm \ref{alg:R2BCDM}. Then for any $k\geq 0$,
\begin{equation}\label{eq:sublinear}
\mathbf{E}_{\eta_{k}}\left[f(x^{k})\right]-f^\ast
\leq\frac{N-1}{N+k-1}\tilde{R}^2(x^0).
\end{equation}
In addition, if $f$ is strongly convex with parameter $\mu_f>0$, then
\begin{equation}\label{eq:linear}
\mathbf{E}_{\eta_{k}}\left[f(x^{k})\right]-f^\ast
\leq\left(1-\frac{2\mu_f}{(N-1)(1+\mu_f)}\right)^k\tilde{R}^2
(x^0).
\end{equation}
\end{theorem}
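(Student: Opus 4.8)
The plan is to reduce one iteration of Algorithm~\ref{alg:R2BCDM} to a fixed fraction of one step of the idealized ``full projected-gradient step'' on the feasible subspace $S$, and then to run a Lyapunov argument on the latter. First I would record the per-pair progress: combining the block descent inequality~\eqref{eq:bloquaapp} with the closed form~\eqref{eq:optdir} shows that updating the pair $(i,j)$ decreases $f$ by exactly $\tfrac{1}{2(L_i+L_j)}\|\nabla f_i(x_i)-\nabla f_j(x_j)\|^2$. Averaging over $(i,j)\in E$ with the weights~\eqref{eq:probability} and using the identity $p_{ij}/(L_i+L_j)=1/\bigl(L_iL_j(N-1)\sum_{t}L_t^{-1}\bigr)$, the whole reason for the choice~\eqref{eq:probability} is that the quadratic form $\sum_{(i,j)\in E}L_i^{-1}L_j^{-1}\|\nabla f_i(x_i)-\nabla f_j(x_j)\|^2$ collapses --- and this is exactly where the new basis-vector lemmas for $S$ (Lemmas~\ref{lem:imp1} and~\ref{lem:imp2}) enter --- to $\bigl(\sum_t L_t^{-1}\bigr)\sum_i L_i^{-1}\|\nabla f_i(x_i)-\lambda(x)\|^2$, where $\lambda(x):=\bigl(\sum_t L_t^{-1}\bigr)^{-1}\sum_i L_i^{-1}\nabla f_i(x_i)$ is the $L^{-1}$-weighted mean of the block gradients. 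Writing $m(x):=\min_{d\in S}\{f(x)+\langle\nabla f(x),d\rangle+\tfrac12\|d\|_L^2\}$ for the exact value of the quadratic model minimized over the feasible subspace --- which a direct computation identifies as $f(x)-\tfrac12\sum_i L_i^{-1}\|\nabla f_i(x_i)-\lambda(x)\|^2$ --- this yields the one-step estimate
\[
\mathbf{E}\bigl[f(x^{k+1})\mid\eta_k\bigr]\;\le\;f(x^k)-\frac{1}{N-1}\bigl(f(x^k)-m(x^k)\bigr),
\]
which is the sought extension of Nesterov's second technique to the linearly coupled setting.

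Next I would set up the two auxiliary estimates that let the recursion close. (i)~A three-point inequality for the model value: for every feasible $y$, inserting $d=y-x^k\in S$ into the minimum defining $m(x^k)$ and using $\mu_f$-strong convexity~\eqref{eq:y1} gives $m(x^k)\le f(y)+\tfrac{1-\mu_f}{2}\|y-x^k\|_L^2$; specializing $y=x^k+\alpha(x^\ast-x^k)$ and invoking strong convexity along this segment gives, for every $\alpha\in[0,1]$,
\[
f(x^k)-m(x^k)\;\ge\;\alpha\bigl(f(x^k)-f^\ast\bigr)+\tfrac{\alpha(\mu_f-\alpha)}{2}\|x^k-x^\ast\|_L^2 .
\]
(ii)~An exact expansion of the distance to $x^\ast$: since $U_{i}d_i(x)+U_{j}d_j(x)$ changes only blocks $i,j$ and $d_j=-d_i$, expanding $\|x^{k+1}-x^\ast\|_L^2$ and averaging over $(i,j)$ --- again using the basis-vector lemmas to collapse the resulting bilinear sum, which telescopes to $-2\bigl(\sum_t L_t^{-1}\bigr)\langle\nabla f(x^k),x^k-x^\ast\rangle$ --- produces
\[
\mathbf{E}\bigl[\|x^{k+1}-x^\ast\|_L^2\mid\eta_k\bigr]\;=\;\|x^k-x^\ast\|_L^2+\frac{2}{N-1}\Bigl(\bigl(f(x^k)-m(x^k)\bigr)-\langle\nabla f(x^k),x^k-x^\ast\rangle\Bigr).
\]
Here $x^\ast\in X^\ast$ is fixed to attain $\tilde{R}(x^0)$; note $x^k-x^\ast\in S$ for every $k$, and the first-order optimality condition for~\eqref{eq:problem} forces $\nabla f_i(x_i^\ast)$ to be independent of $i$, whence $\langle\nabla f(x^\ast),x^k-x^\ast\rangle=0$, which together with~\eqref{eq:gloquaapp} at $x^\ast$ also gives $f(x^k)-f^\ast\le\tfrac12\|x^k-x^\ast\|_L^2$ for all $k$. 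Moreover $m(x^k)\ge f^\ast$ (the quadratic model dominates $f$ on $S$), so convexity gives $\langle\nabla f(x^k),x^k-x^\ast\rangle\ge f(x^k)-f^\ast\ge f(x^k)-m(x^k)$ and the distance is nonincreasing in expectation.

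For the strongly convex bound~\eqref{eq:linear} I would sharpen that last inequality via the co-coercivity inequality for a $\mu_f$-strongly convex, $1$-smooth (with respect to $\|\cdot\|_L$, by~\eqref{eq:gloquaapp}) function: since $\langle\nabla f(x^\ast),x^k-x^\ast\rangle=0$, and since $\lambda(x^k)$ minimizes $\lambda\mapsto\sum_i L_i^{-1}\|\nabla f_i(x_i^k)-\lambda\|^2$ and is therefore no worse than the common value $\nabla f_i(x_i^\ast)$, so that $f(x^k)-m(x^k)\le\tfrac12\|\nabla f(x^k)-\nabla f(x^\ast)\|_{L^{-1}}^2$ with $\|g\|_{L^{-1}}^2:=\sum_i L_i^{-1}\|g_i\|^2$, one obtains
\[
\langle\nabla f(x^k),x^k-x^\ast\rangle\;\ge\;\frac{\mu_f}{1+\mu_f}\|x^k-x^\ast\|_L^2+\frac{2}{1+\mu_f}\bigl(f(x^k)-m(x^k)\bigr).
\]
Substituting this into the distance identity of step (ii), the term $f(x^k)-m(x^k)$ enters with a coefficient proportional to $\mu_f-1\le0$ and may be dropped, leaving $\mathbf{E}[\|x^{k+1}-x^\ast\|_L^2\mid\eta_k]\le\bigl(1-\tfrac{2\mu_f}{(N-1)(1+\mu_f)}\bigr)\|x^k-x^\ast\|_L^2$; iterating this contraction and using $f(x^k)-f^\ast\le\tfrac12\|x^k-x^\ast\|_L^2$ yields~\eqref{eq:linear}.

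For the sublinear bound~\eqref{eq:sublinear} ($\mu_f=0$) the distance is only nonincreasing in expectation, so I would combine the one-step decrease with estimate~(i): writing $a_k:=\mathbf{E}[f(x^k)-f^\ast]$ and using $\mathbf{E}[\|x^k-x^\ast\|_L^2]\le\tilde{R}^2(x^0)$, passing to full expectations and optimizing over $\alpha$ in~(i) gives a recursion of the form $a_{k+1}\le a_k-c\,a_k^2$; inverting it ($1/a_{k+1}\ge 1/a_k+c$, using that $\{a_k\}$ is nonincreasing since $f(x^{k+1})\le f(x^k)$ holds deterministically) together with $a_0\le\tfrac12\tilde{R}^2(x^0)$ gives a bound of order $(N-1)\tilde{R}^2(x^0)/k$; recovering the precise constant $\tfrac{N-1}{N+k-1}$ requires a sharper, coupled bookkeeping of $f(x^k)-f^\ast$ and $\|x^k-x^\ast\|_L^2$ --- keeping the distance decrease rather than bounding it by $\tilde{R}^2(x^0)$ --- carried out by a sequence lemma in the spirit of~\cite{LuX15}. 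The step I expect to be the crux is the first one: making the non-uniform average over $E$ telescope onto a single full-subspace step is exactly what Lemmas~\ref{lem:imp1} and~\ref{lem:imp2} are engineered for, and without an explicit orthogonal basis of $S$ adapted to the weights $L_i$ the $p_{ij}$-weighted sums do not simplify. The secondary difficulty is the stochastic bookkeeping in the $\mu_f=0$ case, where $\|x^k-x^\ast\|_L^2$ being random prevents naively combining the two conditional recursions.
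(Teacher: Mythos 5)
Your strongly convex argument is correct and genuinely different from the paper's: you contract $\mathbf{E}\|x^k-x^\ast\|_L^2$ alone, using co-coercivity of $f$ in the $\|\cdot\|_L$ norm together with $\nabla f(x^\ast)\in S^\perp$ and the bound $f(x^k)-m(x^k)\le\tfrac12\|\nabla f(x^k)-\nabla f(x^\ast)\|_{L^{-1}}^2$, and then convert via $f(x^k)-f^\ast\le\tfrac12\|x^k-x^\ast\|_L^2$; this even yields \eqref{eq:linear} with an extra factor $\tfrac12$. The paper instead contracts the combined quantity $\tfrac12\|x^k-x^\ast\|_L^2+f(x^k)-f^\ast$ directly from its key one-step inequality \eqref{eq:iterel}, using only $\langle\nabla f(x^k),x^k-x^\ast\rangle\ge\frac{2\mu_f}{1+\mu_f}\bigl[\tfrac12\|x^k-x^\ast\|_L^2+f(x^k)-f^\ast\bigr]$, so no co-coercivity is needed. (Two small points: the pairwise update decreases $f$ by \emph{at least}, not exactly, $\frac{1}{2(L_i+L_j)}\|\nabla f_i(x_i)-\nabla f_j(x_j)\|^2$; and the collapse of the $p_{ij}$-weighted sum of $\|\nabla f_i-\nabla f_j\|^2$ is elementary weighted-variance algebra --- Lemma~\ref{lem:imp2} is really only needed for the cross term in your distance expansion (ii), where you use it correctly.)

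The genuine gap is the sublinear bound \eqref{eq:sublinear}: you never obtain the constant $\frac{N-1}{N+k-1}$. Your route $a_{k+1}\le a_k-c\,a_k^2$ loses roughly a factor $2$ even in the best case, and, as you yourself note, it cannot be closed as written because $\|x^k-x^\ast\|_L^2$ is random and only nonincreasing in expectation, so $\tilde{R}^2(x^0)$ cannot simply be substituted inside the conditional recursion; the ``sequence lemma in the spirit of \cite{LuX15}'' that is supposed to rescue the exact constant is never stated. The resolution uses only ingredients you already have: add your one-step descent estimate to your distance identity (ii). The $f(x^k)-m(x^k)$ terms cancel exactly, giving
\[
\mathbf{E}_{(i_k,j_k)}\Bigl[\tfrac12\|x^{k+1}-x^\ast\|_L^2+f(x^{k+1})-f^\ast\Bigr]
\le \tfrac12\|x^k-x^\ast\|_L^2+f(x^k)-f^\ast-\tfrac{1}{N-1}\bigl(f(x^k)-f^\ast\bigr),
\]
which is \eqref{eq:iterel} combined with convexity, and which eliminates both the factor-$2$ loss and the need to control $\Delta_k^2/\|x^k-x^\ast\|_L^2$. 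Summing this over $l=0,\dots,k$, using that $f(x^{l+1})\le f(x^l)$ holds deterministically (so $\mathbf{E}_{\eta_l}[f(x^l)]$ is nonincreasing and each summand can be replaced by the last one), and bounding $\tfrac12\|x^0-x^\ast\|_L^2+f(x^0)-f^\ast\le\tilde{R}^2(x^0)$ via Lemma~\ref{lem:imp3} with $x^\ast$ attaining $\tilde{R}(x^0)$, gives $\bigl(1+\tfrac{k+1}{N-1}\bigr)\bigl(\mathbf{E}_{\eta_{k+1}}[f(x^{k+1})]-f^\ast\bigr)\le\tilde{R}^2(x^0)$, i.e.\ exactly \eqref{eq:sublinear}. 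This is the paper's argument; without this cancellation step your proposal proves only an order-$(N-1)\tilde{R}^2(x^0)/k$ rate, not the stated theorem.
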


As a result of Theorem \ref{thm:coninexp}, we may derive the following high-probability type of iteration complexities of Algorithm \ref{alg:R2BCDM} for finding an $\epsilon$-optimal solution of problem (\ref{eq:problem}).
\begin{theorem}\label{thm:coninpro}
Let $\epsilon\in(0,f(x^0)-f^\ast)$ and $\rho\in(0,1)$ be the desired accuracy and the confidence level, respectively. Let $\{x^k\}$ be the sequence generated by Algorithm \ref{alg:R2BCDM}. Then for all $k\geq K$, there holds
\begin{equation}\label{eq:convergencprob}
\mathbf{Prob}\left[f(x^k)-f^\ast\leq\epsilon\right]\geq 1-\rho,
\end{equation}
where
\begin{equation}\label{eq:iterationconvexour}
K:=\frac{2(N-1)R^2(x^0)}{\epsilon}\left[1+\log\left(\frac{
\tilde{R}^2(x^0)}{2R^2(x^0)\rho}\right)\right]-N+3.
\end{equation}
In addition, if $f$ is strongly convex with parameter $\mu_f>0$, then
(\ref{eq:convergencprob}) holds when $k\geq \tilde{K}$, where
\begin{equation}\label{eq:iterationstronglyour}
\tilde{K}:=\frac{(N-1)(1+\mu_f)}{2\mu_f}\log\left(\frac{\tilde{R}^2(x^0)}{\rho\epsilon}\right).
\end{equation}
\end{theorem}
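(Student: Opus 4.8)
The plan is to derive Theorem~\ref{thm:coninpro} from the expected-value estimates of Theorem~\ref{thm:coninexp} by coupling a deterministic monotonicity property of Algorithm~\ref{alg:R2BCDM} with Markov's inequality, in the spirit of Richt\'{a}rik and Tak\'{a}\v{c}~\cite{RichtarikT14}. Write $\xi^k:=f(x^k)-f^\ast$ (which is $\ge 0$ since $x^k\in S$), and let $R(x^0)$ be the radius of the level set $\{x\in S:f(x)\le f(x^0)\}$ measured by $\min_{x^\ast\in X^\ast}\|\cdot-x^\ast\|_L$, so that $\tilde R(x^0)\le R(x^0)$. Two preliminary facts drive everything. \emph{Monotonicity}: inserting the closed form \eqref{eq:optdir} into \eqref{eq:bloquaapp} gives, for every realization of $(i_k,j_k)$,
\[
f(x^{k+1})\le f(x^k)-\frac{\|\nabla f_{i_k}(x^k_{i_k})-\nabla f_{j_k}(x^k_{j_k})\|^2}{2(L_{i_k}+L_{j_k})}\le f(x^k),
\]
so $\{\xi^k\}$ is nonincreasing almost surely and every $x^k$ stays in the above level set. \emph{A one-step recursion}: averaging this descent inequality over $E$ with the weights $p_{ij}$ of \eqref{eq:probability} and using convexity together with the level-set bound, one obtains
\[
\mathbf{E}\!\left[\,\xi^{k+1}\mid\eta_k\,\right]\le\xi^k-\frac{(\xi^k)^2}{2(N-1)R^2(x^0)},
\]
which (take $k=0$ and use $\xi^1\ge 0$) in particular forces $\epsilon<\xi^0\le 2(N-1)R^2(x^0)$.

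The strongly convex case is then immediate. By \eqref{eq:linear} and Markov's inequality, $\mathbf{Prob}[\xi^k>\epsilon]\le(1-q)^k\tilde R^2(x^0)/\epsilon$ with $q=2\mu_f/\big((N-1)(1+\mu_f)\big)$; since $1-q\le e^{-q}$, the right-hand side is $\le\rho$ as soon as $k\ge q^{-1}\log\!\big(\tilde R^2(x^0)/(\rho\epsilon)\big)=\tilde K$, and being decreasing in $k$ it stays $\le\rho$ for all $k\ge\tilde K$, which gives \eqref{eq:iterationstronglyour}.

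For the general convex case I would use the truncation device. Put $\hat\xi^k:=\xi^k\,\mathbf{1}_{\{\xi^k\ge\epsilon\}}$. Monotonicity implies $\mathbf{1}_{\{\xi^{k+1}\ge\epsilon\}}\le\mathbf{1}_{\{\xi^{k}\ge\epsilon\}}$, hence $\hat\xi^{k+1}\le\xi^{k+1}\mathbf{1}_{\{\xi^{k}\ge\epsilon\}}$; feeding this into the one-step recursion and using $\xi^k\ge\epsilon$ on the support of $\hat\xi^k$ gives the geometric contraction $\mathbf{E}[\hat\xi^{k+1}]\le\big(1-\tfrac{\epsilon}{2(N-1)R^2(x^0)}\big)\mathbf{E}[\hat\xi^{k}]$. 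I would then split the run into two phases: in phase~1, \eqref{eq:sublinear} yields $\mathbf{E}[\xi^{k_0}]\le v:=\tilde R^2(x^0)\epsilon/\big(2R^2(x^0)\big)$ after $k_0:=\lceil 2(N-1)R^2(x^0)/\epsilon\rceil-N+1$ steps; in phase~2, iterating the contraction from $k_0$ and using $1-t\le e^{-t}$ gives $\mathbf{E}[\hat\xi^{K}]\le e^{-(K-k_0)\epsilon/(2(N-1)R^2(x^0))}\,v$, and since $\mathbf{Prob}[\xi^K\ge\epsilon]\le\mathbf{E}[\hat\xi^K]/\epsilon$ by Markov, this is $\le\rho$ once $K-k_0\ge\tfrac{2(N-1)R^2(x^0)}{\epsilon}\log\!\big(\tilde R^2(x^0)/(2R^2(x^0)\rho)\big)$. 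Summing the two phase lengths (the integer roundings cost at most $2$) reproduces exactly the value of $K$ in \eqref{eq:iterationconvexour}, after which a final use of monotonicity upgrades ``$\mathbf{Prob}[\xi^K\ge\epsilon]\le\rho$'' to ``$\mathbf{Prob}[\xi^k\ge\epsilon]\le\rho$ for all $k\ge K$'', i.e.\ \eqref{eq:convergencprob}.

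The step I expect to be the main obstacle is the one-step recursion: passing from the averaged per-step decrease $\sum_{(i,j)\in E}p_{ij}\,\|\nabla f_i(x^k_i)-\nabla f_j(x^k_j)\|^2/\big(2(L_i+L_j)\big)$ to a lower bound of the form $(\xi^k)^2/\big(2(N-1)R^2(x^0)\big)$ requires expressing this decrease through the gradient of $f$ restricted to the constraint subspace $S$ and then invoking convexity along $S$, which is precisely where the explicit basis of $S$ supplied by Lemmas~\ref{lem:imp1}--\ref{lem:imp2} enters. Everything afterwards --- the truncation identity, the two-phase split, and the elementary arithmetic that turns the two phase lengths into \eqref{eq:iterationconvexour} --- is routine.
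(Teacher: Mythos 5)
Your argument is correct and essentially the paper's own proof: the strongly convex case is the same Markov-inequality computation with \eqref{eq:linear} and $1-q\le e^{-q}$, and the convex case uses the same Richt\'arik--Tak\'a\v{c} truncation of $\Delta_k=f(x^k)-f^\ast$, the same one-step recursion, and the same combination with \eqref{eq:sublinear}, your explicit two-phase split with target $\tilde{R}^2(x^0)\epsilon/\bigl(2R^2(x^0)\bigr)$ being exactly the paper's optimal choice $t^\ast$ in its one-parameter family $K(t)$. The only remark worth making is that the step you flag as the main obstacle, namely $\mathbf{E}\left[\Delta_{k+1}\mid\eta_k\right]\le\Delta_k-\Delta_k^2/\bigl(2(N-1)R^2(x^0)\bigr)$, is not re-derived in the paper but simply quoted from \cite[Theorem 3.1]{NecoaraNG15} (it is not where Lemmas \ref{lem:imp1}--\ref{lem:imp2} enter; those serve Theorem \ref{thm:coninexp}), so no new work is needed there.
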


The proofs for the above theorems will be given in Section \ref{sec:proof}. Let us compare our results with  those in \cite{NecoaraNG15}.

Firstly, for the case when $f$ is a smooth convex function, Necoara, Nesterov, and Glineur \cite[Theorem 3.1]{NecoaraNG15} showed that
\begin{equation}\label{eq:Necoarasublin}
\mathbf{E}_{\eta_{k}}\left[f(x^{k})\right]-f^\ast\leq\frac{2(N-1)R^2(x^0)}{k},
\end{equation}
where $R(x^0):=\max_{x\in S}\min_{x^\ast\in X^\ast}\left\{\Vert x-x^\ast\Vert_L:f(x)\leq f(x^0)\right\}$. Let $A$ and $B$ denote the right-hand side of (\ref{eq:sublinear}) and (\ref{eq:Necoarasublin}), respectively. Then we have
\begin{equation*}
\frac{B}{A}=2\left(\frac{N-1}{k}+1\right)\frac{R^2(x^0)}{\tilde{R}^2(x^0)}\geq 2\left(\frac{N-1}{k}+1\right).
\end{equation*}
Since in practice, the number of blocks $N$ in problem (\ref{eq:problem}) is usually of large scale and $\tilde{R}(x^0)$ could be much smaller than $R(x^0)$, we thus conclude that our bound \eqref{eq:sublinear} is significantly smaller than \eqref{eq:Necoarasublin}.

Secondly, for the case when $f$ is a smooth strongly convex function, Necoara, Nesterov, and Glineur \cite[Theorem 3.2]{NecoaraNG15} showed that
\begin{equation}\label{eq:Necoaralin}
\mathbf{E}_{\eta_{k}}\left[f(x^{k})\right]-f^\ast\leq\left(1-\frac{\mu_f}{N-1}\right)^k(f(x^0)-f^\ast).
\end{equation}
From the inequalities \eqref{eq:gloquaapp} and \eqref{eq:y1}, we arrive at the fact that $\mu_f\leq 1.$ 
This implies that
$1-\frac{2\mu_f}{(N-1)(1+\mu_f)}\leq 1-\frac{\mu_f}{N-1}$.
Thus, the convergence rate (\ref{eq:linear}) is much sharper than the rate (\ref{eq:Necoaralin}).

Using the same argument as in the proof of Theorem \ref{thm:coninpro}, the convergence with high probability  can be straightforwardly derived from \cite[Theorems 3.1 and 3.2]{NecoaraNG15} as follows: If $f$ is a smooth convex function, then (\ref{eq:convergencprob}) holds for all $k\geq \bar{K}$, where
\begin{equation}\label{eq:iterationconvexNNG}
\bar{K}:=\frac{2(N-1)R^2(x^0)}{\epsilon}\left(1+\log\frac{1}
{\rho}\right)+2.
\end{equation}
If, in addition, $f$ is strongly convex with parameter $\mu_f>0$, then (\ref{eq:convergencprob}) holds for all $k\geq \hat{K}$, where
\begin{equation}\label{eq:iterationstrongNNG}
\hat{K}:=\frac{N-1}{\mu_f}\log\left(\frac{f(x^0)-f^\ast}{\epsilon\rho}\right).
\end{equation}

Let us make a comparison between the complexities $K$ and $\bar{K}$. It follows from (\ref{eq:iterationconvexour}) and (\ref{eq:iterationconvexNNG}) that
\begin{eqnarray*}
\bar{K}-K&=&\frac{2(N-1)R^2(x^0)}{\epsilon}\log\left(\frac{2R^2(x^0)}{\tilde{R}^2(x^0)}\right)+N-1\\
         &\geq&\frac{(N-1)R^2(x^0)\log4}{\epsilon}+N-1.
\end{eqnarray*}
Thus, the iteration complexity $\bar{K}$ implied in \cite{NecoaraNG15} is substantially larger than ours. To compare $\tilde{K}$ with $\hat{K}$, we have by (\ref{eq:iterationstronglyour}) and (\ref{eq:iterationstrongNNG}) that for sufficiently small $\rho$ or $\epsilon$,
\begin{equation*}
\frac{\hat{K}}{\tilde{K}}\approx\frac{2}{1+\mu_f}\geq 1
\end{equation*}
due to the fact that $\mu_f\leq 1$. Therefore, our iteration complexity $\tilde{K}$ is tighter than $\hat{K}$ when $\rho$ or $\epsilon$ is sufficiently small.

\section{Proofs of main results}\label{sec:proof}
In this section we provide the proofs of Theorems \ref{thm:coninexp} and \ref{thm:coninpro}. 
For this purpose, we need first establish some preliminary results including three technical lemmas.

\subsection{Preliminary results}\label{sunsec:preliminaries}

Let $\{e_l:1\leq l\leq N\}$ and $\{\tilde{e}_m:1\leq m\leq n\}$ denote standard basis vectors in $\mathbb{R}^{N}$ and $\mathbb{R}^{n}$, respectively. Define, for $l=1,\ldots,N-1$ and $m=1,\ldots,n$, $v_{lm}:=(e_l-e_{l+1})\otimes\tilde{e}_m$, where $\otimes$ denotes the Kronecker product. Then we have
\begin{lemma}\label{lem:imp1}
The vectors $\{v_{lm}:1\leq l\leq N-1,1\leq m\leq n\}$ form a basis for the space $S$.
\end{lemma}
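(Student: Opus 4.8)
The plan is to combine a dimension count with a direct linear–independence argument, exploiting the block structure that the Kronecker product imposes on each $v_{lm}$. First I would observe that $S$ is the kernel of the linear map $\mathbb{R}^{nN}\to\mathbb{R}^{n}$ sending $x\mapsto\sum_{i=1}^{N}x_i=\sum_{i=1}^{N}U_i^{T}x$; this map is clearly surjective, so $\dim S = nN-n = n(N-1)$. Since the proposed family $\{v_{lm}:1\le l\le N-1,\ 1\le m\le n\}$ has exactly $n(N-1)$ members, it will suffice to check that each $v_{lm}$ lies in $S$ and that the family is linearly independent.

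The key preliminary step is to unwind $v_{lm}=(e_l-e_{l+1})\otimes\tilde{e}_m$ at the level of blocks: the $i$-th block $U_i^{T}v_{lm}$ equals $\tilde{e}_m$ if $i=l$, equals $-\tilde{e}_m$ if $i=l+1$, and is $0$ otherwise. In particular $\sum_{i=1}^{N}U_i^{T}v_{lm}=\tilde{e}_m-\tilde{e}_m=0$, so $v_{lm}\in S$ for every $l,m$.

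For linear independence, suppose $\sum_{l=1}^{N-1}\sum_{m=1}^{n}c_{lm}v_{lm}=0$. Reading off the first block and using that $\{\tilde{e}_m\}_{m=1}^{n}$ is a basis of $\mathbb{R}^{n}$ gives $c_{1m}=0$ for all $m$; reading off the $l$-th block for $l=2,\dots,N-1$ gives $\sum_{m}(c_{lm}-c_{l-1,m})\tilde{e}_m=0$, and a straightforward induction on $l$ forces $c_{lm}=0$ for all $l,m$. Together with the dimension count this shows $\{v_{lm}\}$ is a basis of $S$. (Alternatively, one can argue spanning directly: for $x\in S$ the coefficients $c_{lm}:=(x_1+\cdots+x_l)_m$ reconstruct $x$, and it is precisely the feasibility constraint $\sum_i x_i=0$ that makes the $N$-th block come out correctly; I would likely include this remark since it clarifies the role of $S$.)

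I do not expect a genuine obstacle here: the content is elementary linear algebra. The only place that warrants care is the bookkeeping that translates the Kronecker-product expression for $v_{lm}$ into its block components, and the telescoping induction that peels off the coefficients $c_{lm}$ one block at a time; everything else is routine.
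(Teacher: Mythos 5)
Your proof is correct, but it takes a slightly different route from the paper. The paper verifies $v_{lm}\in S$ the same way (via $(e^T\otimes I_n)v_{lm}=0$, equivalent to your block computation) and then proves \emph{spanning} directly, by exhibiting for any $x\in S$ the explicit representation $x=\sum_{l=1}^{N-1}\sum_{m=1}^{n}\langle\tilde e_m,\,x_1+\cdots+x_l\rangle\,v_{lm}$ --- precisely the coefficients you mention in your parenthetical remark, where the constraint $\sum_i x_i=0$ is what makes the $N$-th block match; linear independence is left implicit. You instead combine a dimension count ($\dim S=n(N-1)$, since $S$ is the kernel of the surjective map $x\mapsto\sum_i x_i$) with a telescoping linear-independence argument, which is equally valid since an independent family of size $\dim S$ inside $S$ is a basis. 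The trade-off is minor: the paper's constructive spanning formula is the property actually used later (Lemma \ref{lem:imp2} only needs that identity \eqref{eq:imp} can be checked on the $v_{lm}$ and extended by linearity to $S$), whereas your route avoids verifying the explicit formula at the cost of the extra dimension bookkeeping; including your alternative spanning remark would make the two proofs essentially coincide.
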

\begin{proof}
Let $e\in\mathbb{R}^{N}$ denote the column vector with all entries 1 and let $I_n$ denote the identity matrix of order $n$. Then we have
\begin{equation*}
(e^T\otimes I_n)v_{lm}=(e^T\otimes I_n)((e_l-e_{l+1})\otimes\tilde{e}_m)=(e^T(e_l-e_{l+1}))\otimes\tilde{e}_m=0,
\end{equation*}
which immediately implies that $v_{lm}\in S$. On the other hand, note that any $x\in S$ has the following representation:
\begin{equation*}
x=[x_1^T,\ldots,x_{N-1}^T, -x_1^T-\cdots-x_{N-1}^T]^T=\sum_{l=1}^{N-1}\sum_{m=1}^n \langle\tilde{e}_m,x_1+x_2+\cdots+x_l\rangle v_{lm}.
\end{equation*}
Thus, the vectors $\{v_{lm}:1\leq l\leq N-1,1\leq m\leq n\}$ form a basis for the space $S$.
\end{proof}

With the help of Lemma \ref{lem:imp1}, we can prove the following lemma which plays a critical role in our proof of the main results.
\begin{lemma}\label{lem:imp2}
Let $\{p_{ij}\}_{(i,j)\in E}$ be the probability vector over $E$ defined by (\ref{eq:probability}). Then, for all $x\in S$, we have
\begin{equation}\label{eq:imp}
\sum_{(i,j)\in E}\left[\frac{p_{ij}}{L_i+L_j}(e_i-e_j)(L_ie_i^T-L_je_j^T)\otimes I_n\right] x =\frac{1}{N-1}x.
\end{equation}
\end{lemma}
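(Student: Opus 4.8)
The plan is to verify the identity \eqref{eq:imp} by testing it against the basis $\{v_{lm}\}$ of $S$ furnished by Lemma~\ref{lem:imp1}; since both sides of \eqref{eq:imp} are linear in $x$, it suffices to check agreement on each $v_{lm} = (e_l - e_{l+1})\otimes \tilde e_m$. First I would observe that the matrix on the left acts on the Kronecker structure blockwise: writing $M := \sum_{(i,j)\in E}\frac{p_{ij}}{L_i+L_j}(e_i-e_j)(L_ie_i^T - L_je_j^T)$, the claimed identity reduces to showing $M(e_l - e_{l+1}) = \frac{1}{N-1}(e_l - e_{l+1})$ for every $l = 1,\dots,N-1$, because $(M\otimes I_n)((e_l-e_{l+1})\otimes\tilde e_m) = (M(e_l-e_{l+1}))\otimes\tilde e_m$. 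So the $n$-dimensional factor plays no role and the whole statement collapses to a purely $N\times N$ linear-algebra identity.

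Next I would compute $(e_i - e_j)(L_i e_i^T - L_j e_j^T)$ explicitly: its action on a vector $w\in\mathbb{R}^N$ is $(L_i w_i - L_j w_j)(e_i - e_j)$, where $w_i$ denotes the $i$-th coordinate. Hence $Mw = \sum_{(i,j)\in E}\frac{p_{ij}}{L_i+L_j}(L_i w_i - L_j w_j)(e_i - e_j)$. Applying this with $w = e_l - e_{l+1}$, so that $w_l = 1$, $w_{l+1} = -1$, and $w_t = 0$ otherwise, the only surviving terms in the sum are those pairs $(i,j)\in E$ containing $l$ or $l+1$. I would then split these into: (a) the pair $\{l, l+1\}$ itself; (b) pairs $\{l, t\}$ or $\{t, l\}$ with $t\notin\{l, l+1\}$; and (c) pairs $\{l+1, t\}$ or $\{t, l+1\}$ with $t\notin\{l, l+1\}$. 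Substituting the explicit value $p_{ij} = \frac{1/L_i + 1/L_j}{(N-1)\sum_t 1/L_t}$ from \eqref{eq:probability}, each coefficient $\frac{p_{ij}}{L_i+L_j}(L_i w_i - L_j w_j)$ simplifies because $\frac{1/L_i + 1/L_j}{L_i + L_j} = \frac{1}{L_i L_j}$, which is the key cancellation. Collecting the coefficient of $e_l$ and of $e_{l+1}$ in $Mw$, and checking that the coefficient of every other $e_t$ vanishes (contributions from a pair $\{l,t\}$ and from interpreting $w_t=0$), I expect to land exactly on $\frac{1}{N-1}(e_l - e_{l+1})$.

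The main obstacle — really the only non-routine point — is the bookkeeping in step three: making sure the contributions to coordinate $e_t$ for $t\neq l, l+1$ genuinely cancel, and that the diagonal-type terms from pairs $\{l,t\}$ and $\{l+1,t\}$ reassemble with the right sign into a clean multiple of $e_l - e_{l+1}$. After the simplification $\frac{1/L_i+1/L_j}{L_i+L_j} = \frac{1}{L_iL_j}$, the coefficient of $e_l$ in $Mw$ becomes a sum of terms of the form $\frac{1}{L_l L_t}\cdot(\text{something})$ over $t\ne l$, and one needs the normalization constant $(N-1)\sum_t 1/L_t$ in the denominator of $p_{ij}$ to turn $\sum_{t\ne l}\frac{1}{L_t}$ (after adding back the $t=l+1$ term with its own sign handled separately) into precisely $\sum_t\frac1{L_t}$, yielding the factor $\frac{1}{N-1}$. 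I would organize this by writing the coefficient of $e_l$ as $\frac{1}{(N-1)\sum_t 1/L_t}\cdot\frac{1}{L_l}\sum_{t=1}^N\frac1{L_t} = \frac{1}{(N-1)L_l}$, and similarly the coefficient of $e_{l+1}$ as $-\frac{1}{(N-1)L_{l+1}}$; dividing $Mw$ back out, or rather comparing with $\frac{1}{N-1}w = \frac{1}{N-1}(e_l - e_{l+1})$, one sees a mismatch of the $L_l$ versus $L_{l+1}$ factors, which forces me to be careful: in fact $Mw$ is \emph{not} $\frac{1}{N-1}w$ coordinatewise in the naive way, and the factors $L_l$, $L_{l+1}$ must reappear from the $(L_i w_i - L_j w_j)$ weights so that everything is consistent. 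Getting these Lipschitz weights to line up correctly is the delicate part; once verified on every $v_{lm}$, linearity extends the identity to all of $S$ and the lemma follows.
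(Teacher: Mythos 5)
Your overall strategy --- reducing \eqref{eq:imp} by linearity to the basis vectors $v_{lm}$ from Lemma~\ref{lem:imp1}, factoring out the Kronecker structure so that only the $N\times N$ identity $M(e_l-e_{l+1})=\frac{1}{N-1}(e_l-e_{l+1})$ needs checking, and using the cancellation $\frac{1/L_i+1/L_j}{L_i+L_j}=\frac{1}{L_iL_j}$ from \eqref{eq:probability} --- is exactly the route the paper takes. The gap is that the one step that actually constitutes the proof, collecting the coefficients, is executed incorrectly and then left unresolved. Your claimed coefficients $\frac{1}{(N-1)L_l}$ for $e_l$ and $-\frac{1}{(N-1)L_{l+1}}$ for $e_{l+1}$ are wrong; if they were correct the lemma would be false, since $Mw$ would not be a multiple of $e_l-e_{l+1}$ unless $L_l=L_{l+1}$. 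The slip is dropping the weights $L_iw_i-L_jw_j$ at the last moment: for a pair $\{l,t\}$ with $t\notin\{l,l+1\}$ the relevant coefficient is $\frac{1}{L_lL_t}\cdot L_l w_l=\frac{1}{L_t}$, so no stray factor $\frac{1}{L_l}$ ever survives. You notice the mismatch yourself, but then only assert that the Lipschitz factors ``must reappear so that everything is consistent,'' which is precisely the statement the proof has to verify; as written, the decisive cancellation is not established.

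Here is the bookkeeping that closes the gap, and it is what the paper's proof amounts to. Write $w=e_l-e_{l+1}$ and $C=(N-1)\sum_{t=1}^N\frac{1}{L_t}$, so that $Mw=\frac{1}{C}\sum_{(i,j)\in E}\bigl(\frac{w_i}{L_j}-\frac{w_j}{L_i}\bigr)(e_i-e_j)$. The pair $\{l,l+1\}$ contributes $\bigl(\frac{1}{L_l}+\frac{1}{L_{l+1}}\bigr)(e_l-e_{l+1})$; for each $t\notin\{l,l+1\}$ the pair containing $l$ contributes $\frac{1}{L_t}(e_l-e_t)$ and the pair containing $l+1$ contributes $\frac{1}{L_t}(e_t-e_{l+1})$, so the $e_t$ components cancel in pairs and the two terms sum to $\frac{1}{L_t}(e_l-e_{l+1})$. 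Altogether $Mw=\frac{1}{C}\bigl(\sum_{t=1}^N\frac{1}{L_t}\bigr)(e_l-e_{l+1})=\frac{1}{N-1}(e_l-e_{l+1})$; in particular the coefficient of $e_l$ is $\frac{1}{N-1}$, with no dependence on $L_l$. With this computation supplied, your argument coincides with the paper's proof.
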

\begin{proof}
By Lemma \ref{lem:imp1}, it suffices to show that (\ref{eq:imp}) holds for all $v_{lm}$.
Let $\delta_{ij}$ denote the Kronecker delta: $\delta_{ij}=1$ if $i=j$, and $\delta_{ij}=0$ otherwise. By (\ref{eq:probability}), we have
\begin{eqnarray*}
&&(N-1)\left(\sum_{t=1}^N\frac{1}{L_t}\right)\sum_{(i,j)\in E}\left[\frac{p_{ij}}{L_i+L_j}(e_i-e_j)(L_ie_i^T-L_je_j^T)(e_l-e_{l+1}) \right]\\
&\overset{(\ref{eq:probability})}{=}&\sum_{(i,j)\in E}\left[\frac{1}{L_iL_j}(e_i-e_j)(L_i\delta_{il}-L_i\delta_{i,l+1}-L_j\delta_{jl}+L_j\delta_{j,l+1}) \right]\\
&=&\sum_{(i,j)\in E}\left[\frac{1}{L_j}(e_i-e_j)(\delta_{il}-\delta_{i,l+1})\right]+\sum_{(i,j)\in E}\left[\frac{1}{L_i}(e_i-e_j)(-\delta_{jl}+\delta_{j,l+1})\right]\\
&=&\sum_{j>l}\frac{1}{L_j}(e_l-e_j)-\sum_{j>l+1}\frac{1}{L_j}(e_{l+1}-e_j) -\sum_{i<l}\frac{1}{L_i}(e_i-e_l)+\sum_{i<l+1}\frac{1}{L_i}(e_i-e_{l+1})\\
&=&\frac{1}{L_{l+1}}(e_l-e_{l+1})+\sum_{j>l+1}\frac{1}{L_j}(e_{l}-e_{l+1}) +\sum_{i<l}\frac{1}{L_i}(e_l-e_{l+1}) +\frac{1}{L_l}(e_l-e_{l+1})\\
&=&\left(\sum_{i=1}^N \frac{1}{L_i}\right)(e_l-e_{l+1}),
\end{eqnarray*}
and hence it follows that
\begin{eqnarray*}
\sum_{(i,j)\in E}\left[\frac{p_{ij}}{L_i+L_j}(e_i-e_j)(L_ie_i^T-L_je_j^T)(e_l-e_{l+1}) \right]=\frac{1}{N-1}(e_l-e_{l+1}).
\end{eqnarray*}
In view of this and recalling that $v_{lm}=(e_l-e_{l+1})\otimes\tilde{e}_m$, we get
\begin{eqnarray*}
&&\sum_{(i,j)\in E}\left[\frac{p_{ij}}{L_i+L_j}(e_i-e_j)(L_ie_i^T-L_je_j^T)\otimes I_n\right] v_{lm}\\
&=&\sum_{(i,j)\in E}\left[\frac{p_{ij}}{L_i+L_j}(e_i-e_j)(L_ie_i^T-L_je_j^T)(e_l-e_{l+1})\right] \otimes\tilde{e}_m\\
&=&\frac{1}{N-1}(e_l-e_{l+1})\otimes\tilde{e}_m=\frac{1}{N-1}v_{lm},
\end{eqnarray*}
and thus (\ref{eq:imp}) holds for all $x\in S$.
\end{proof}

\begin{lemma}\label{lem:imp3}
Let $x^0\in S$ be the initial point of Algorithm \ref{alg:R2BCDM}, then we have
\begin{equation}\label{eq:keyineq}
f(x^0)-f^\ast\leq\frac{1}{2}\tilde{R}^2(x^0).
\end{equation}
\end{lemma}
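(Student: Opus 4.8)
\textbf{Proof proposal for Lemma \ref{lem:imp3}.}

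The plan is to exploit the fact that the quadratic model \eqref{eq:gloquaapp} is an upper bound for $f$, and that a single step of the type defining $x^+$ already achieves a quantified decrease of this model. The key observation is that although $\tilde R(x^0)$ is defined through the norm $\|\cdot\|_L$, the inequality \eqref{eq:keyineq} does not involve any block-coordinate structure: it is simply a statement that $f$, being a sum of functions with $L_i$-Lipschitz gradients, cannot exceed its optimum by more than $\tfrac12\|x^0 - x^\ast\|_L^2$ for a well-chosen $x^\ast \in X^\ast$. So first I would fix $x^\ast \in X^\ast$ to be the minimizer realizing $\tilde R(x^0) = \|x^0 - x^\ast\|_L$, and set $d = x^\ast - x^0 \in S$ (the difference of two feasible points lies in $S$). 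Then apply \eqref{eq:gloquaapp} with $x = x^0$ and this $d$:
\begin{equation*}
f^\ast = f(x^\ast) = f(x^0 + d) \leq f(x^0) + \langle \nabla f(x^0), d\rangle + \tfrac12 \|d\|_L^2 .
\end{equation*}

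The remaining task is to control the cross term $\langle \nabla f(x^0), d\rangle$. Here I would use convexity of $f$ together with optimality of $x^\ast$. By convexity, $f(x^0) \geq f(x^\ast) + \langle \nabla f(x^\ast), x^0 - x^\ast\rangle$, i.e. $\langle \nabla f(x^\ast), d\rangle \leq f^\ast - f(x^0)$; but since $x^\ast$ minimizes $f$ over the affine space $S$ (equivalently, the constrained problem), the gradient $\nabla f(x^\ast)$ is orthogonal to $S$, so $\langle \nabla f(x^\ast), d\rangle = 0$ because $d \in S$. Hence $f(x^0) \le f^\ast$ would follow --- which is absurd unless one is more careful: in fact the correct route is to bound $\langle \nabla f(x^0), d \rangle$ directly. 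Using convexity at $x^0$ against $x^\ast$ gives $f^\ast = f(x^\ast) \ge f(x^0) + \langle \nabla f(x^0), x^\ast - x^0\rangle = f(x^0) + \langle \nabla f(x^0), d\rangle$, so $\langle \nabla f(x^0), d\rangle \le f^\ast - f(x^0) \le 0$. Feeding this back into the descent inequality above yields $f^\ast \le f(x^0) + (f^\ast - f(x^0)) + \tfrac12\|d\|_L^2$, which is vacuous. So the naive two-term estimate is not enough, and one genuinely needs the finer argument: either telescope the per-step decrease of Algorithm \ref{alg:R2BCDM} starting from $x^0$, or use the standard gradient-mapping inequality. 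The cleanest is: minimizing the right side of \eqref{eq:gloquaapp} over all $d$ with $x^0 + d \in S$ gives a point $\hat x$ with $f(\hat x) \le f(x^0) + \min_{d\in S}\big(\langle\nabla f(x^0),d\rangle + \tfrac12\|d\|_L^2\big)$; then compare this minimum against the choice $d = t(x^\ast - x^0)$ for $t \in [0,1]$ and optimize in $t$, exactly as in the classical analysis of gradient descent, to obtain $f(\hat x) - f^\ast \le$ a quantity that, combined with $f(\hat x) \ge f^\ast$, forces $f(x^0) - f^\ast \le \tfrac12\|x^0 - x^\ast\|_L^2 = \tfrac12 \tilde R^2(x^0)$.

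More precisely, with $d = t(x^\ast - x^0)$ and $\phi(t) = \langle \nabla f(x^0), t(x^\ast-x^0)\rangle + \tfrac{t^2}{2}\|x^\ast - x^0\|_L^2 \le t(f^\ast - f(x^0)) + \tfrac{t^2}{2}\tilde R^2(x^0)$, I would take $t = 1$ (or optimize), getting $\min_{d\in S}(\cdots) \le f^\ast - f(x^0) + \tfrac12\tilde R^2(x^0)$. Since the left side is $\ge f(\hat x) - f(x^0) \ge f^\ast - f(x^0)$ is the wrong direction --- instead, the point is that $f(\hat x) - f(x^0) \le \phi(1) \le f^\ast - f(x^0) + \tfrac12 \tilde R^2(x^0)$, and $f(\hat x) \ge f^\ast$ gives $f^\ast - f(x^0) \le f^\ast - f(x^0) + \tfrac12\tilde R^2(x^0)$, still vacuous. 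The honest resolution, and the one I expect the authors use, is to recognize \eqref{eq:keyineq} as the $k \to \infty$ consequence, or rather the $k=1$ base case, packaged differently: one shows $f(x^0) - f^\ast \le \tfrac12\tilde R^2(x^0)$ by noting that for \emph{any} $y \in S$, convexity gives $f(x^0) - f^\ast \le \langle \nabla f(x^0), x^0 - x^\ast\rangle$, and then bounding this inner product is the crux.

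\textbf{The main obstacle} I anticipate is precisely this cross-term: a one-line two-term bound is circular, so the proof must invoke either (i) the per-iteration descent guarantee of Algorithm \ref{alg:R2BCDM} — summing the expected decreases from iteration $0$ and using that $f$ is bounded below by $f^\ast$ — or (ii) a direct comparison showing $\langle \nabla f(x^0), x^0 - x^\ast \rangle \le \tfrac12\|x^0-x^\ast\|_L^2$ via the co-coercivity/descent lemma applied along the segment $[x^\ast, x^0]$. I would pursue route (ii): write $g(t) = f(x^\ast + t(x^0 - x^\ast))$, note $g$ is convex with $g'(0) \ge 0$ (optimality of $x^\ast$ on $S$ along the feasible direction $x^0 - x^\ast$), $g(1) = f(x^0)$, $g(0) = f^\ast$, and $g'(1) = \langle \nabla f(x^0), x^0 - x^\ast\rangle$; then the quadratic upper bound \eqref{eq:gloquaapp} translated to $g$ gives $g(0) \le g(1) - g'(1) + \tfrac12\|x^0 - x^\ast\|_L^2$, i.e. $f^\ast \le f(x^0) - \langle\nabla f(x^0), x^0-x^\ast\rangle + \tfrac12\tilde R^2(x^0)$, and combined with convexity $f(x^0) - f^\ast \le \langle \nabla f(x^0), x^0 - x^\ast\rangle$ this yields $2(f(x^0) - f^\ast) \le \tfrac12\tilde R^2(x^0)\cdot\!$—wait, adding gives $f(x^0)-f^\ast \le \tfrac12\tilde R^2(x^0)$ directly after cancellation. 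That is the argument; the only care needed is justifying $g'(0)\ge 0$ from first-order optimality of the constrained problem and confirming $x^0 - x^\ast \in S$, both of which are immediate.
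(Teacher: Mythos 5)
Your final ``route (ii)'' does not close. The two inequalities you combine are the descent inequality applied at $x^0$ with step $x^\ast-x^0$, namely $f^\ast \le f(x^0) - \langle\nabla f(x^0),\, x^0 - x^\ast\rangle + \tfrac12\tilde R^2(x^0)$, and convexity at $x^0$, namely $f(x^0) - f^\ast \le \langle\nabla f(x^0),\, x^0-x^\ast\rangle$. Adding them, the inner products cancel, but so does $f^\ast$ against $f(x^0)-f^\ast$: what remains is $f(x^0) \le f(x^0) + \tfrac12\tilde R^2(x^0)$, a tautology --- precisely the vacuity you had already run into twice. The claimed ``cancellation'' yielding $f(x^0)-f^\ast\le\tfrac12\tilde R^2(x^0)$ is an algebra slip, and the fact $g'(0)\ge 0$ that you announce is never actually used (by itself it even points the wrong way, since $g'(0)$ would enter the bound with a plus sign).

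The missing one-line idea --- which is the paper's entire proof --- is to apply the quadratic upper bound \eqref{eq:gloquaapp} at $x^\ast$ rather than at $x^0$: $f(x^0) = f\big(x^\ast + (x^0-x^\ast)\big) \le f^\ast + \langle\nabla f(x^\ast),\, x^0 - x^\ast\rangle + \tfrac12\Vert x^0-x^\ast\Vert_L^2$, and then kill the linear term \emph{exactly} using the first-order optimality condition $\nabla f(x^\ast)\in S^{\perp}$ together with $x^0 - x^\ast \in S$; since the feasible set is a subspace, the optimality condition is the equality $g'(0)=0$, not merely $g'(0)\ge 0$. Minimizing over $x^\ast\in X^\ast$ then gives \eqref{eq:keyineq}. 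You in fact wrote down the orthogonality $\langle\nabla f(x^\ast), d\rangle = 0$ early on, but abandoned that track because of a sign error: convexity at $x^\ast$ gives $\langle\nabla f(x^\ast),\, x^\ast - x^0\rangle \ge f^\ast - f(x^0)$, not $\le$, so with orthogonality it yields the harmless $f(x^0)\ge f^\ast$ rather than the ``absurd'' $f(x^0)\le f^\ast$. Had you stayed on that track and paired the vanishing linear term with the descent lemma at $x^\ast$ (instead of with convexity at $x^0$), you would have landed exactly on the paper's argument.
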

\begin{proof}
Let $x^\ast$ be an arbitrary optimal solution of problem (\ref{eq:problem}).
Then it satisfies the following optimality condition
\begin{equation}\label{eq:optcondi}
x^\ast\in S~~~\text{and}~~~\nabla f(x^\ast)\in S^{\perp},
\end{equation}
where $S^\perp=\Big\{x\in\mathbb{R}^{nN}:x_1=x_2=\cdots=x_N\Big\}$ denotes the orthogonal complement of $S$.
By using (\ref{eq:gloquaapp}) and (\ref{eq:optcondi}), we get
\begin{equation*}
f(x^0)-f^\ast\leq\langle\nabla f(x^\ast),x^0-x^\ast\rangle+\frac{1}{2}\Vert x^0-x^\ast\Vert_L^2=\frac{1}{2}\Vert x^0-x^\ast\Vert_L^2.
\end{equation*}
By the definition of $\tilde{R}(x^0)$, the desired result follows.
\end{proof}

\subsection{Proofs of Theorems \ref{thm:coninexp} and \ref{thm:coninpro}}\label{subsec:proof}
\begin{proof}[Proof of Theorem \ref{thm:coninexp}]

For $k\geq 0$, denote
\begin{equation*}
r_k^2=\Vert x^k-x^\ast\Vert_L^2=\sum_{i=1}^NL_i\left\Vert x_i^k-x_i^\ast\right\Vert^2.
\end{equation*}
Similar to the proof in \cite{LuX15}, we are going to derive the relationship between $\mathbf{E}\left[\frac{1}{2}r_{k+1}^2+f(x^{k+1})-f^\ast\right]$ and $\mathbf{E}\left[\frac{1}{2}r_k^2+f(x^k)-f^\ast\right].$
To this end, noting that $x^{k+1}=x^k+U_{i_k}d_{i_k}(x^k)+U_{j_k}d_{j_k}(x^k)$ and in view of (\ref{eq:optdir}), we obtain
\begin{eqnarray*}
r_{k+1}^2&=&\left\Vert x^k-x^\ast+U_{i_k}d_{i_k}(x^k)+U_{j_k}d_{j_k}(x^k)\right\Vert_L^2\\
&=&\sum_{i\notin\{i_k,j_k\}}L_i\left\Vert x_i^k-x_i^\ast\right\Vert^2+\sum_{i\in\{i_k,j_k\}}L_i\left\Vert x_i^k-x_i^\ast+d_{i}(x^k)\right\Vert^2\\
&=&r_k^2+\sum_{i\in\{i_k,j_k\}}L_i\left\Vert d_{i}(x^k)\right\Vert^2+2\sum_{i\in\{i_k,j_k\}}L_i\langle x_i^k-x_i^\ast,d_{i}(x^k)\rangle\\
&\overset{(\ref{eq:optdir})}{=}&r_k^2+\frac{1}{L_{i_k}+L_{j_k}}\left\Vert \nabla f_{i_k}(x_{i_k}^k)-\nabla f_{j_k}(x_{j_k}^k) \right\Vert^2+2\sum_{i\in\{i_k,j_k\}}L_i\langle x_i^k-x_i^\ast,d_{i}(x^k)\rangle\\
&=&r_k^2+\frac{1}{L_{i_k}+L_{j_k}}\left\Vert \nabla f_{i_k}(x_{i_k}^k)-\nabla f_{j_k}(x_{j_k}^k)\right\Vert^2+\frac{2}{L_{i_k}+L_{j_k}}\bigg[\left\langle L_{i_k}U_{i_k}^T(x^\ast-x^k),\right.\\
&&\left.(U_{i_k}-U_{j_k})^T\nabla f(x^k)\right\rangle-\left\langle L_{j_k}U_{j_k}^T(x^\ast-x^k),(U_{i_k}-U_{j_k})^T\nabla f(x^k)\right\rangle\bigg].
\end{eqnarray*}
Recalling the definition of $f$, from (\ref{eq:blocklip})  we further obtain that for all $x_i,d_i\in \mathbb{R}^{n}$,
\begin{equation*}
f(x+U_id_i+U_jd_j)\leq f(x)+\langle\nabla f_i(x_i),d_i\rangle+\langle\nabla f_j(x_j),d_j\rangle+\frac{L_i}{2}\Vert d_i\Vert^2+\frac{L_j}{2}\Vert d_j\Vert^2,
\end{equation*}
which together with (\ref{eq:optdir}) implies that
\begin{equation}\label{eq:deslem}
f(x)-f(x+U_id_i(x)+U_jd_j(x))\geq \frac{1}{2(L_i+L_j)}\Vert\nabla f_i(x_i)-\nabla f_j(x_j)\Vert^2.
\end{equation}
From  (\ref{eq:deslem}), we further obtain that
\begin{eqnarray*}
r_{k+1}^2
&=&r_k^2+\frac{1}{L_{i_k}+L_{j_k}}\left\Vert\nabla f_{i_k}(x_{i_k}^k)-\nabla f_{j_k}(x_{j_k}^k)\right\Vert^2+\frac{2}{L_{i_k}+L_{j_k}}\bigg[\left\langle (U_{i_k}-U_{j_k})(L_{i_k}U_{i_k}^T\right.\\
&&\left.-L_{j_k}U_{j_k}^T)(x^\ast-x^k),\nabla f(x^k)\right\rangle\bigg]\\
&=&r_k^2+\frac{1}{L_{i_k}+L_{j_k}}\left\Vert\nabla f_{i_k}(x_{i_k}^k)-\nabla f_{j_k}(x_{j_k}^k)\right\Vert^2+\frac{2}{L_{i_k}+L_{j_k}}\bigg[\left\langle (e_{i_k}-e_{j_k})(L_{i_k}e_{i_k}^T\right.\\
&&\left.-L_{j_k}e_{j_k}^T)\otimes I_n(x^\ast-x^k),\nabla f(x^k)\right\rangle\bigg]\\
&\overset{(\ref{eq:deslem})}{\leq}&r_k^2+2(f(x^k)-f(x^{k+1}))+\frac{2}{L_{i_k}+L_{j_k}}\bigg[\left\langle (e_{i_k}-e_{j_k})(L_{i_k}e_{i_k}^T-L_{j_k}e_{j_k}^T)\otimes I_n(x^\ast\right.\\
&&\left.-x^k),\nabla f(x^k)\right\rangle\bigg].
\end{eqnarray*}
First multiplying both sides of the above inequality by $1/2$, then taking the expectation with respect to $(i_k,j_k)$, and finally rearranging terms, we arrive at
\begin{eqnarray*}
\mathbf{E}_{(i_k,j_k)}\left[\frac{1}{2}r_{k+1}^2+f(x^{k+1})-f^\ast\right]
&\leq&\left[\frac{1}{2}r_k^2+f(x^k)-f^\ast\right]+\Bigg\langle \sum_{(i,j)\in E}\bigg[\frac{p_{ij}}{L_i+L_j}(e_i\\
&&-e_j)(L_ie_i^T-L_je_j^T)\otimes I_n\bigg](x^\ast-x^k),\nabla f(x^k)\Bigg\rangle.
\end{eqnarray*}
Recall that $\eta_k=\left\{(i_0,j_0),(i_1,j_1),\ldots,(i_{k-1},j_{k-1})\right\}$. 
In light of Lemma \ref{lem:imp2}, it follows that
\begin{equation}\label{eq:iterel}
\mathbf{E}_{(i_k,j_k)}\left[\frac{1}{2}r_{k+1}^2+f(x^{k+1})-f^\ast\right]
\leq\left[\frac{1}{2}r_k^2+f(x^k)-f^\ast\right]+\frac{1}{N-1}\left\langle x^\ast-x^k,\nabla f(x^k)\right\rangle.
\end{equation}
Taking the expectation with respect to $\eta_{k}$ on both sides of (\ref{eq:iterel}) yields
\begin{eqnarray*}
\mathbf{E}_{\eta_{k+1}}\left[\frac{1}{2}r_{k+1}^2+f(x^{k+1})-f^\ast\right]
&\leq&\mathbf{E}_{\eta_{k}}\left[\frac{1}{2}r_k^2+f(x^k)-f^\ast\right]+
\frac{1}{N-1}\times\\
&&\mathbf{E}_{\eta_{k}}
\left[\left\langle x^\ast-x^k,\nabla f(x^k)\right\rangle\right].
\end{eqnarray*}
Notice that, by (\ref{eq:deslem}), the sequence $\{\mathbf{E}_{\eta_{k}}[f(x^k)]\}$
is decreasing. Using this fact and applying the above inequality recursively, we get
\begin{eqnarray*}
\mathbf{E}_{\eta_{k+1}}\left[f(x^{k+1})\right]-f^\ast&\leq&
\mathbf{E}_{\eta_{k+1}}\left[\frac{1}{2}
r_{k+1}^2+f(x^{k+1})-f^\ast\right]\\
&\leq&\frac{1}{2}r_0^2+f(x^0)-f^\ast+\frac{1}{N-1}\sum_{l=0}^k\mathbf{E}_{\eta_{l}}
\left[\left\langle x^\ast-x^l,\nabla f(x^l)\right\rangle\right]\\
&\leq&\frac{1}{2}r_0^2+f(x^0)-f^\ast-\frac{1}{N-1}\sum_{l=0}^k\left(\mathbf{E}_{\eta_{l}}
\left[f(x^{l})\right]-f^\ast\right)\\
&\leq&\frac{1}{2}r_0^2+f(x^0)-f^\ast-\frac{k+1}{N-1}\Big(\mathbf{E}_{\eta_{k+1}}
\left[f(x^{k+1})\right]-f^\ast\Big).
\end{eqnarray*}
Combining the last inequality with Lemma \ref{lem:imp3}, we obtain
\begin{equation*}
\mathbf{E}_{\eta_{k+1}}\left[f(x^{k+1})\right]-f^\ast
\leq\frac{N-1}{N+k}\left[\frac{1}{2}r_0^2+f(x^0)-f^\ast\right]\overset{(\ref{eq:keyineq})}{\leq}\frac{N-1}{N+k}\left[\frac{1}{2}
r_0^2+\frac{1}{2}\tilde{R}^2(x^0)\right],
\end{equation*}
which together with the arbitrariness of $x^\ast$ yields (\ref{eq:sublinear}).

Next, we shall prove (\ref{eq:linear}). By the strong convexity of $f$ and the optimality condition (\ref{eq:optcondi}), we obtain
\begin{eqnarray*}
\langle\nabla f(x^k),x^k-x^\ast\rangle&\geq& f(x^k)-f(x^\ast)+\frac{\mu_f}{2}\Vert x^\ast-x^k\Vert_L^2\\
&\geq&\langle\nabla f(x^\ast),x^k-x^\ast\rangle+\mu_f\Vert x^k-x^\ast\Vert_L^2\\
&\overset{(\ref{eq:optcondi})}{=}&\mu_f\Vert x^k-x^\ast\Vert_L^2.
\end{eqnarray*}
Let $\beta=\frac{2\mu_f}{1+\mu_f}\in[0,1]$. Then
\begin{eqnarray*}
\langle\nabla f(x^k),x^k-x^\ast\rangle&\geq& \beta \left[f(x^k)-f(x^\ast)+\frac{\mu_f}{2}\Vert x^\ast-x^k\Vert_L^2\right]+(1-\beta)
\mu_f\Vert x^\ast-x^k\Vert_L^2\\
&=&\frac{2\mu_f}{1+\mu_f}\left[\frac{1}{2}r_k^2+f(x^k)-f^\ast\right].
\end{eqnarray*}
Substituting this inequality into inequality (\ref{eq:iterel}) gives
\begin{equation*}
\mathbf{E}_{(i_k,j_k)}\left[\frac{1}{2}r_{k+1}^2+f(x^{k+1})-f^\ast\right]
\leq\left(1-\frac{2\mu_f}{(N-1)(1+\mu_f)}\right)\left[\frac{1}{2}r_k^2+f(x^k)-f^\ast\right].
\end{equation*}
Taking expectation with respect to $\eta_k$ on both sides of the above relation, we get
\begin{equation*}
\mathbf{E}_{\eta_{k+1}}\left[\frac{1}{2}r_{k+1}^2+f(x^{k+1})-f^\ast\right]
\leq\left(1-\frac{2\mu_f}{(N-1)(1+\mu_f)}\right)\mathbf{E}_{\eta_{k}}\left[\frac{1}{2}r_k^2+f(x^k)-f^\ast\right],
\end{equation*}
from which the inequality (\ref{eq:linear}) follows.
\end{proof}

\begin{proof}[Proof of Theorem \ref{thm:coninpro}]
Let $\Delta_k=f(x^k)-f^\ast$ for all $k\geq 0$. It was proved in \cite[Theorem 3.1]{NecoaraNG15} that
\begin{equation}\label{eq:recurel}
\mathbf{E}_{(i_k,j_k)}\left[\Delta_{k+1}\right]\leq \Delta_{k}-\frac{\Delta_{k}^2}{2(N-1)R^2(x^0)}.
\end{equation}
Define the sequence $\left\{\Delta_k^\epsilon\right\}$ as follows: $\Delta_k^\epsilon=\Delta_k$ if $\Delta_k\geq\epsilon$, and $\Delta_k^\epsilon=0$ otherwise.
Using (\ref{eq:recurel}) and the same argument as used in the proof of \cite[Theorem 1]{RichtarikT14}, we get
\begin{equation*}
\mathbf{E}_{(i_k,j_k)}\left[\Delta_{k+1}^\epsilon\right]\leq\left( 1-\frac{\epsilon}{2(N-1)R^2(x^0)}\right)\Delta_{k}^\epsilon.
\end{equation*}
Taking expectation with respect to $\eta_k$ on both sides of the above relation gives
\begin{equation}\label{eq:trunrecurel}
\mathbf{E}_{\eta_{k+1}}\left[\Delta_{k+1}^\epsilon\right]\leq\left( 1-\frac{\epsilon}{2(N-1)R^2(x^0)}\right)\mathbf{E}_{\eta_{k}}\left[\Delta_{k}^\epsilon\right].
\end{equation}
In addition, using (\ref{eq:sublinear}) and the relation $\Delta_{k}^\epsilon\leq\Delta_{k}$, we obtain
\begin{equation}\label{eq:trunsublin}
\mathbf{E}_{\eta_{k}}\left[\Delta_{k}^\epsilon\right]
\leq\frac{N-1}{N+k-1}\tilde{R}^2(x^0).
\end{equation}
Now for any $t>0$, let
$K_1=\left\lceil\frac{N-1}{t\epsilon}\tilde{R}^2(x^0)
\right\rceil-N+1$ and $K_2=\left\lceil\frac{2(N-1)R^2(x^0)}{\epsilon}\log\frac{t}
{\rho}\right\rceil$.
It follows from (\ref{eq:trunrecurel}) and (\ref{eq:trunsublin}) that
\begin{eqnarray*}
\mathbf{E}_{\eta_{K_1+K_2}}\left[\Delta_{K_1+K_2}^\epsilon\right]&\leq&\left( 1-\frac{\epsilon}{2(N-1)R^2(x^0)}\right)^{K_2}\mathbf{E}_{\eta_{K_1}}\left[\Delta_{K_1}^\epsilon\right]
\leq t\epsilon\left( 1-\frac{\epsilon}{2(N-1)R^2(x^0)}\right)^{K_2}\\
&\leq&t\epsilon\exp\left( -\frac{\epsilon K_2}{2(N-1)R^2(x^0)}\right)
\leq\rho\epsilon.
\end{eqnarray*}
Note that, by (\ref{eq:trunrecurel}), the sequence $\left\{\mathbf{E}_{\eta_{k}}\left[\Delta_{k}^\epsilon\right]\right\}$ is decreasing.
Therefore for $k\geq K(t)$, we have $\mathbf{E}_{\eta_{k}}\left[\Delta_{k}^\epsilon\right]\leq \rho\epsilon$,
where
\begin{equation*}
K(t)=\frac{(N-1)\tilde{R}^2(x^0)}{t\epsilon}
-N+3+\frac{2(N-1)R^2(x^0)}{\epsilon}\log\frac{t}
{\rho}.
\end{equation*}
By the definition of $K$, one can easily check that $K=K(t^\ast)$, where
$t^\ast:=\arg\min_{t>0}K(t)$. Finally, using the Markov inequality, we obtain that for any $k\geq K$
\begin{equation*}
\mathbf{Prob}\left(f(x^k)-f^\ast>\epsilon\right)=
\mathbf{Prob}\left(\Delta_k>\epsilon\right)
\leq\mathbf{Prob}\left(\Delta_k^\epsilon\geq\epsilon\right)\leq
\frac{\mathbf{E}_{\eta_{k}}\left[\Delta_{k}^\epsilon\right]}{\epsilon}
\leq\rho,
\end{equation*}
which immediately implies that the first claim holds.

We next show the second claim. Using the Markov inequality, the inequality (\ref{eq:linear}) and the definition of $\tilde{K}$ we
obtain that for any $k\geq \tilde{K}$
\begin{eqnarray*}
\mathbf{Prob}\left(f(x^k)-f^\ast>\epsilon\right)&\leq&\frac{\mathbf{E}_{\eta_{k}}
\left[f(x^k)-f^\ast\right]}{\epsilon}\leq\frac{1}{\epsilon}\left(1-\frac{2\mu_f}{(N-1)(1+\mu_f)}\right)^{\tilde{K}}
\tilde{R}^2(x^0)\\
&\leq&\frac{1}{\epsilon}\exp\left(-\frac{2\tilde{K}\mu_f}{(N-1)(1+\mu_f)}\right)
\tilde{R}^2(x^0)\leq\rho
\end{eqnarray*}
and hence the second claim holds.
\end{proof}

\section{Conclusions}
\label{sec:conclusions}
In this paper, we extended Nesterov's second technique \cite{Nesterov12} to analyze the convergence of the RBCD algorithm proposed in \cite{NecoaraNG15}    for solving  the large-scale optimization problem \eqref{eq:problem} with linearly coupled constraints.  Compared with those presented in \cite{NecoaraNG15}, our convergence rates are significantly sharper. We also derive better iteration complexities in probability for the method. In particular, when $f$ is a smooth convex function, our iteration complexity is smaller than the one implied in \cite{NecoaraNG15} by at least $N-1$. 

There are several directions for future work. Firstly, it remains an open question on how to extend the  techniques to analyze the convergence of RBCD methods for for composite optimization with linearly coupled constraints.  Secondly, it would be very interesting to generalize  different choices for the probability distribution for generating a single block for decoupled optimization problems \cite{Qu1,Qu2, Rich,Zhang}  to the case of generating pairs of blocks for the optimization problem \eqref{eq:problem}. 

\section*{Acknowledgments}
This work was done during Drs Fan and Xu were visiting the Department of Mathematics and Statistics, SUNY Albany. 
The work by Qin Fan and Min Xu was supported by the National Nature Science Foundation of China (11301052, 11301045) and the Fundamental Research Funds for the Central Universities (DUT16LK33, DUT15RC(3)058). The work by Yiming Ying described in this paper is supported by the Simons Foundation (\#422504) and the 2016-2017 Presidential Innovation Fund for Research and Scholarship (PIFRS) program from SUNY Albany.

\end{document}